\newtheorem{theo}{Theorem}[section]
\newtheorem{prop}[theo]{Proposition}
\newtheorem{lem}[theo]{Lemma}
\newtheorem{cor}[theo]{Corollary}
\theoremstyle{definition}
\newtheorem{defi}{Definition}[section]
\theoremstyle{remark}
\newtheorem*{rem}{Remark}
\begin{document}

\title[On handlebody-knot pairs]{On handlebody-knot pairs which realize exteriors of knotted surfaces in $S^3$}
\author{Shundai Osada}
\address{Faculty of Mathematics, Kyushu University, 744, Motooka, Nishi-ku, Fukuoka, 819-0395, Japan}
\email{s.osada@kyudai.jp}
\subjclass{57M25, 57M50}
\keywords{knotted surface, handlebody knot}

\begin{abstract}
	In this paper, we describe the relation between the study of closed connected surfaces embedded in $S^3$ and the theory of handlebody-knots. By Fox's theorem, a pair of handlebody-knots is associated to a closed connected surface embedded in $S^3$ in the sense that their exterior components are pairwise homeomorphic. We show that for every handlebody-knot pair associated to a genus two ``prime bi-knotted'' surface, one is irreducible, and the other is reducible. Furthermore, for given two genus two handlebody-knots $H_1$ and $H_2$ satisfying certain conditions, we will construct a ``prime bi-knotted'' surface whose associated handlebody-knot pair coincides with $H_1$ and $H_2$.

\end{abstract}

\maketitle

\section{Introduction}

The study of closed connected surfaces embedded in $S^3$ has been investigated since around 1970. Waldhausen \cite{Waldhausen} showed that for any two Heegaard surfaces of $S^3$ which have the same genus, there exists an isotopy of $S^3$ taking one onto the other. Suzuki \cite{Suzuki} showed that for any positive integer $g$, there exists a prime genus $g$ surface embedded in $S^3$ (see Section~\ref{section:Surface in S^3} for the explicit definition of a prime surface). Tsukui \cite{Tsukui-1, Tsukui-2} gave a necessary and sufficient condition for the primeness of a closed connected surface embedded in $S^3$ for the case of genus two. 

A handlebody-knot is a handlebody embedded in $S^3$, and the theory of such objects has been extensively studied and developed recently (for example, see \cite{Ishii, I-K-M-S}). According to a classical theorem of Fox \cite{Fox}, for every closed connected surface embedded in $S^3$, there exist two handlebody-knots whose exteriors are homeomorphic to the two components of the exterior of the surface, respectively. Thus, a pair of handlebody-knots is associated to such a surface, although they are not unique in general (see \cite{Lee}, for example). On the other hand, there exists a closed connected surface which does not bound handlebodies on either side. We say that such a surface is ``bi-knotted". 

The purpose of this paper is to describe the relation between the study of surfaces embedded in $S^3$ and the theory of handlebody-knots. We first show that for every handlebody-knot pair associated to a genus two prime bi-knotted surface, one is irreducible and the other is reducible (see Proposition~\ref{prop:irr-red}). Furthermore, given two genus two handlebody-knots $H_1$ and $H_2$ satisfying certain conditions, we will construct a prime bi-knotted surface whose associated handlebody-knot pair coincides with $H_1$ and $H_2$. 

The paper is organized as follows. In Section~\ref{section:Surface in S^3}, we give some definitions and notation about closed connected surfaces embedded in $S^3$. In Section~\ref{section:handlebody-knot}, we give fundamental definitions for handlebody-knots. In Section~\ref{section:handlebody-knot-pair}, we study pairs of handlebody-knots associated to closed connected surfaces embedded in $S^3$. Finally, in Section~\ref{section:main theorem}, we state and prove our main theorem (Theorem~\ref{theo:main}).

\section{Surfaces in $S^3$}\label{section:Surface in S^3}

Throughout this paper, we work in the PL category, and a surface in $S^3$ means a closed connected orientable surface embedded in $S^3$.
In this section, we give some definitions and notation about surfaces in $S^3$. 
\begin{defi}
	Let $F$ be a surface in $S^3$. The \emph{exterior} of $F$ is the closure of $S^3 \setminus N(F)$ in $S^3$, where $N(F)$ denotes the regular neighborhood of $F$ in $S^3$. By virtue of the Alexander duality, the exterior has exactly two connected components, which we denote by $V_F$ and $W_F$.
	\begin{itemize}
		\item[(1)]We say that $F$ is \emph{unknotted} if both $V_F$ and $W_F$ are homeomorphic to handlebodies.
		\item[(2)]We say that $F$ is \emph{knotted} if it is not unknotted, i.e.\ if at least one of  $V_F$ and $W_F$ is not homeomorphic to a handlebody.
		\item[(3)]We say that $F$ is \emph{bi-knotted} if neither $V_F$ nor $W_F$ is homeomorphic to a handlebody. 
	\end{itemize}
\end{defi}

\begin{figure}[ht]
		\includegraphics[width=12cm]{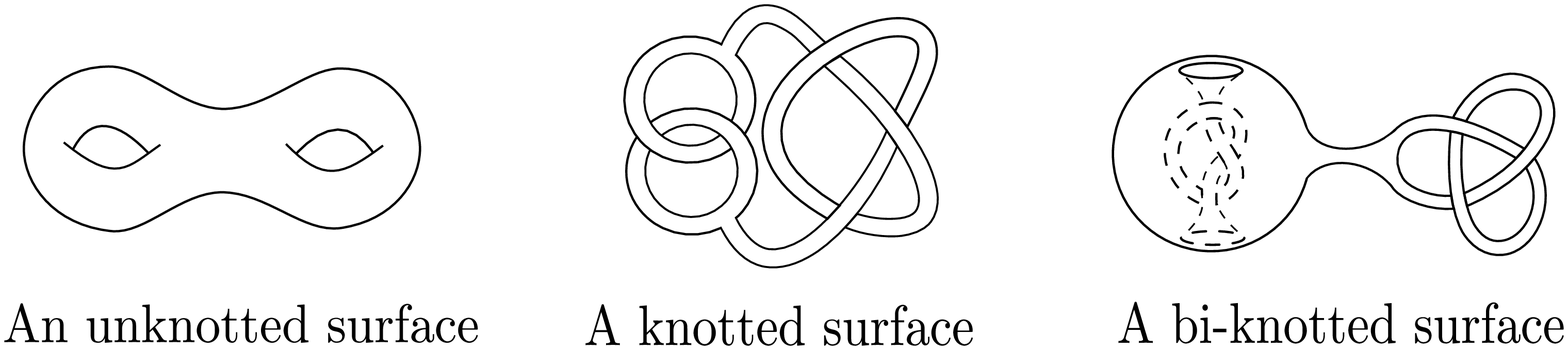}
		\caption{Surfaces of genus two in $S^3$}
		\label{surfaces}
\end{figure}

See Figure~\ref{surfaces} for explicit examples.

\begin{rem}
	Surfaces of genus zero are always unknotted, and the genus of a bi-knotted surface is always greater than or equal to two. These are direct consequences of Alexander's results \cite{Alexander}.
\end{rem}

Two surfaces in $S^3$ are said to be \emph{equivalent} if there exists an isotopy of $S^3$ taking one onto the other. Waldhausen \cite{Waldhausen} showed that two unknotted surfaces which have the same genus are equivalent. Furthermore, we see easily that the study of equivalence classes of surfaces that are not bi-knotted is reduced to the theory of handlebody-knots (see Section~\ref{section:handlebody-knot} of the present paper). Such surfaces appear as boundaries of handlebody-knots. Therefore, in this paper, we will focus on the study of bi-knotted surfaces.

\begin{defi}
	Let $F_1$ and $F_2$ be surfaces in $S^3$. Suppose that there exists a 3-ball $B^3$ in $S^3$ such that $F_1 \subset {\rm int}\, B^3$ and $F_2 \subset S^3\setminus B^3$. Note that $S^3\setminus (F_1 \cup F_2)$ consists of exactly three connected components, and $\partial B^3$ is contained in one of the components. Let $V$ denote the closure of the component of $S^3\setminus (F_1 \cup F_2)$ which contains $\partial B^3$.  Let $h:D^2\times [-1,1]\to V$ be an embedding such that 
	\begin{eqnarray*}
	 	h(D^2\times [-1,1])\cap F_1 & = & h(D^2\times \{ -1\} ),\\	 	
	 	h(D^2\times [-1,1])\cap F_2 & = & h(D^2\times \{ 1\} ),\, {\rm and}\\
	 	h(D^2\times [-1,1])\cap \partial B^3 & = & h(D^2\times \{ 0\} ).
	 \end{eqnarray*}
Then, we say that the surface $F_1 \sharp F_2$ defined by
	\begin{eqnarray*}
		F_1\sharp F_2=F_1\cup F_2 \cup h(\partial D^2 \times [-1,1])\setminus h({\rm int}\,D^2\times \{ 1,-1\} )
	\end{eqnarray*}
is the \emph{isotopy sum} of $F_1$ and $F_2$. It is known that this does not depend on a particular choice of the embedding $h$ up to isotopy (for details, see \cite{Tsukui-1}).
\end{defi}

See Figure~\ref{isotopy-sum} for an explicit example. 

\begin{figure}[ht]
		\includegraphics[width=12cm]{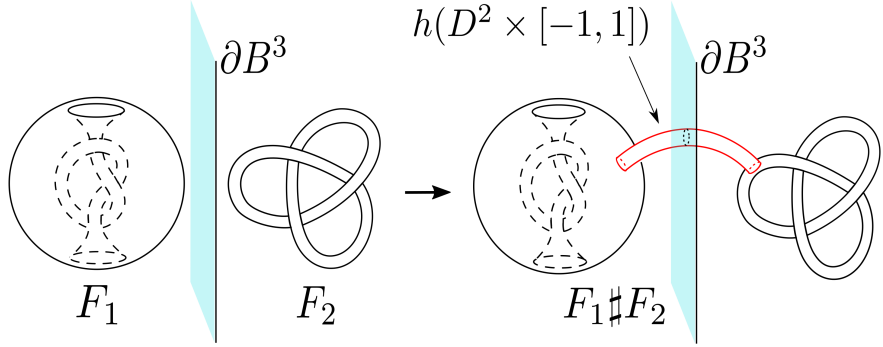}
		\caption{The isotopy sum of $F_1$ and $F_2$}
		\label{isotopy-sum}

\end{figure}

\begin{defi}
	Let $F$ be a surface in $S^3$. We say that $F$ is \emph{prime} if for an arbitrary isotopy sum $F_1 \sharp F_2$ of surfaces $F_1$ and $F_2$ in $S^3$ that is equivalent to $F$, either $F_1$ or $F_2$ is equivalent to $S^2$ in $S^3$.
\end{defi}

For example, the bi-knotted surface in Figure~\ref{surfaces} is not prime, since it is equivalent to the isotopy sum of two knotted tori. The study of genus two non-prime surfaces in $S^3$ is reduced to the standard knot theory, since the factors of such surfaces are tori in $S^3$ and they appear as boundaries of regular neighborhoods of classical knots in $S^3$ by virtue of Alexander's torus theorem \cite{Alexander}. Hence, in this paper, we will focus on prime bi-knotted surfaces. Homma's example (see Figure~\ref{homma-sf}) is an example of a prime bi-knotted surface of genus two.

\begin{figure}[ht]
        \includegraphics[width=8cm]{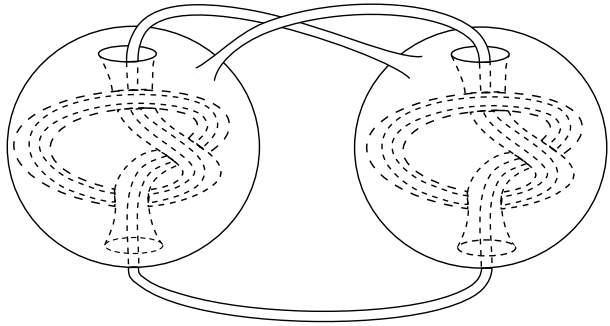}
        \caption{Homma's example \cite{Homma}}
        \label{homma-sf}
\end{figure}

\section{Handlebody-knots}\label{section:handlebody-knot}

In this section, we give fundamental definitions for handlebody-knots. A \emph{handlebody-knot} is a handlebody embedded in $S^3$. Two handlebody-knots are \emph{equivalent} if there exists an isotopy of $S^3$ taking one onto the other. A handlebody-knot is \emph{trivial} if it is equivalent to a handlebody standardly embedded in $S^3$. In other words, a handlebody-knot is trivial if its exterior is homeomorphic to a handlebody. A handlebody-knot is \emph{non-trivial} if it is not equivalent to a trivial handlebody-knot. For explicit examples, see Figure~\ref{handlebody-knots}. Throughout the paper, we adopt the drawing convention that handlebodies are depicted in gray, while surfaces are depicted transparently.

\begin{figure}[ht]
        \includegraphics[width=12cm]{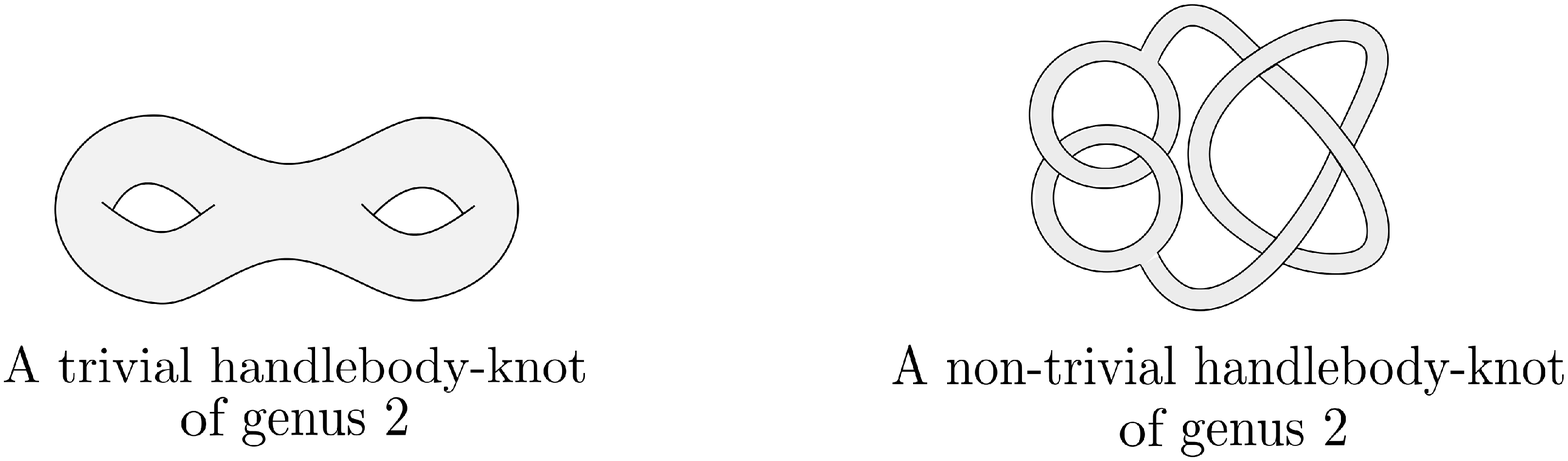}
        \caption{Handlebody-knots of genus two}
        \label{handlebody-knots}
\end{figure}

A \emph{spatial trivalent graph} is a finite trivalent graph embedded in $S^3$. There exist good relations between handlebody-knots and spatial trivalent graphs. For example, the regular neighborhoods of theta-graphs and handcuff graphs (see Figure~\ref{graphs}) embedded in $S^3$ are genus two handlebody-knots, and for every genus two handlebody-knot, there exists a theta-graph or a handcuff graph embedded in $S^3$ whose regular neighborhood coincides with the handlebody-knot.

\begin{figure}[ht]
        \includegraphics[width=7cm]{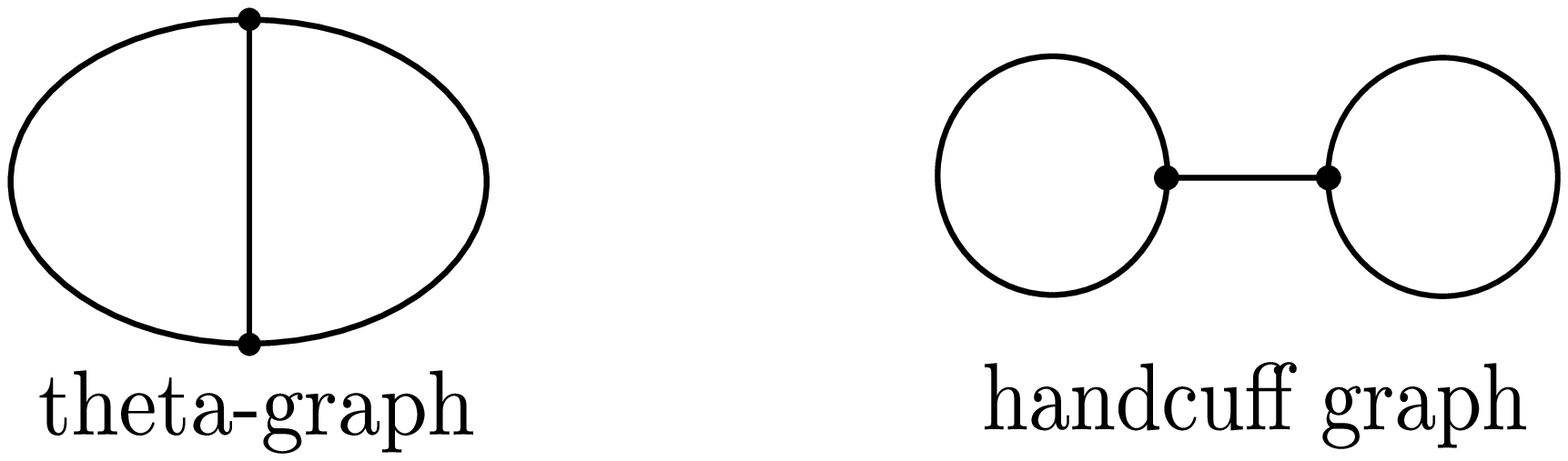}
        \caption{Theta-graph and handcuff graph}
        \label{graphs}
\end{figure}

\begin{defi}
	We say that a handlebody-knot $H$ is \emph{reducible} if there exists a 2-sphere $S^2$ in $S^3$ such that $H \cap S^2$ is an essential 2-disc properly embedded in $H$. Furthermore, we say that a handlebody-knot is \emph{irreducible} if it is not reducible.
\end{defi}

\begin{figure}[ht]
		\includegraphics[width=7cm]{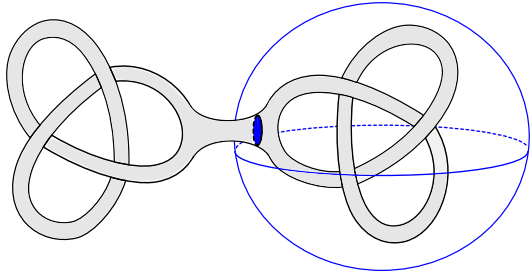}
		\caption{A reducible handlebody-knot}
		\label{red-h.b.knot}
\end{figure}
See Figure~\ref{red-h.b.knot} for an explicit example. The following lemma follows immediately from the definitions .

\begin{lem}\label{lem:irr-prime}
	Let $H$ be a handlebody-knot and let $\partial H$ denote its boundary. Then $H$ is irreducible if and only if $\partial H$ is a prime surface.
\end{lem}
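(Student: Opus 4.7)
The plan is to prove both directions of the equivalence via a direct correspondence: a reducing 2-sphere for $H$ will come from an isotopy-sum decomposition of $\partial H$ with non-spherical factors, and vice versa. I handle each implication by contrapositive.

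For the forward direction, suppose $\partial H$ is equivalent to an isotopy sum $F_1 \sharp F_2$ with neither $F_i$ equivalent to $S^2$. Taking the 3-ball $B^3$ from the definition of isotopy sum, I would set $S^2 := \partial B^3$. Since $F_1 \subset \mathrm{int}\, B^3$ and $F_2 \subset S^3 \setminus B^3$, the sphere $S^2$ meets $\partial H$ transversely in the single circle $c = h(\partial D^2 \times \{0\})$, and $S^2 \cap H$ is a single properly embedded 2-disc $D$ with $\partial D = c$. The circle $c$ separates $\partial H$ into two subsurfaces, each of which is, up to isotopy, $F_i$ minus an open disc; since neither $F_i$ is equivalent to $S^2$, neither subsurface is a disc. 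Hence $c$ does not bound a disc on $\partial H$, so $D$ is essential and $H$ is reducible.

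For the converse, start with a sphere $S^2 \subset S^3$ such that $D := S^2 \cap H$ is an essential disc in $H$. Let $B_1, B_2$ be the two 3-balls bounded by $S^2$, and set $S_i := \partial H \cap B_i$. A short argument shows both $S_i$ are non-empty (if $\partial H \cap B_i$ were empty, then $H \cap B_i$ would be a compact 3-manifold with disc boundary, which is impossible), so $c := \partial D$ separates $\partial H$ into $S_1 \cup S_2$. Define $F_i := S_i \cup D$; after pushing slightly into $\mathrm{int}\, B_i$, this is a closed surface contained in $\mathrm{int}\, B_i$, and using a tube about a short arc transverse to $D$ one realizes $\partial H$ as an isotopy sum $F_1 \sharp F_2$ up to isotopy of $S^3$. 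Since $D$ is essential, $c$ does not bound a disc on $\partial H$, so neither $S_i$, and hence neither $F_i$, is a 2-sphere.

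I expect the main obstacle to be verifying that the surface constructed in the converse direction really is an isotopy sum in the precise sense of the paper's definition, i.e., exhibiting an embedding $h : D^2 \times [-1,1] \to V$ satisfying all three incidence conditions simultaneously; the arc dual to $D$ gives a natural choice for the core of the tube, and the verification is expected to be routine but technical. A smaller subtlety is the separating property of $c$ on $\partial H$, which is handled by the 3-manifold argument sketched above.
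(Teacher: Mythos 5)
The paper gives no proof of this lemma, stating only that it follows immediately from the definitions; your argument is exactly the intended direct correspondence between essential reducing discs for $H$ and isotopy-sum decompositions of $\partial H$ with non-spherical factors, and both directions are correct (including the separation argument for $\partial D$ and the use of Alexander's theorem to identify ``not equivalent to $S^2$'' with ``positive genus''). The remaining details you flag as routine really are routine: the product neighborhood $D \times [-1,1]$ of the essential disc serves directly as the embedding $h$ required by the definition of isotopy sum.
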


See \cite{Ishii, I-K-M-S} for details of the theory of handlebody-knots, for example. 

\section{Handlebody-knot pairs for surfaces in $S^3$}\label{section:handlebody-knot-pair}

In this section, we describe the relation between the study of surfaces in $S^3$ and the theory of handlebody-knots. The following Fox theorem plays an important role in describing the relation. 

\begin{theo}[Fox \cite{Fox}]\label{theo:Fox}
	Let $M$ be a connected compact $3$-dimensional submanifold of $S^3$ such that its boundary is connected and non-empty. Then there exists a handlebody-knot $H$ such that $S^3\setminus {\rm int}\,H$ is homeomorphic to $M$.
\end{theo}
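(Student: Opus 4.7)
The plan is to construct $H$ from a handle decomposition of the abstract 3-manifold $M$, exploiting the fact that the conclusion involves only the homeomorphism type of $S^3 \setminus \mathrm{int}\,H$; in particular, the given embedding of $M$ is used only to ensure that $M$ admits some embedding into $S^3$ and may otherwise be replaced by a new embedding of our choosing.

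First I would take a handle decomposition of $M$. Since $M$ is compact and connected with non-empty connected boundary, a standard handle-cancellation argument gives a decomposition with a single $0$-handle, some $1$-handles, some $2$-handles, and no $3$-handles. Writing $H_0$ for the union of the $0$- and $1$-handles, a handlebody of some genus $p$, we obtain
\[
M \;=\; H_0 \cup h^2_1 \cup \cdots \cup h^2_q,
\]
where each $2$-handle $h^2_j$ is attached along a simple closed curve $c_j \subset \partial H_0$.

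Next I would embed $H_0$ as the standard unknotted genus-$p$ handlebody in $S^3$, yielding the Heegaard splitting $S^3 = H_0 \cup H_0^*$ with $H_0^*$ also a standard handlebody. Using handle slides and, if necessary, stabilization of the handle decomposition, one arranges that every $c_j$ bounds a disk in $H_0^*$ and that these disks may be taken pairwise disjoint. Realizing each $h^2_j$ as a regular neighborhood in $H_0^*$ of such a disk produces an embedding of $M$ into $S^3$ with image $H_0 \cup h^2_1 \cup \cdots \cup h^2_q$. The closure of its complement,
\[
H \;:=\; \overline{H_0^* \setminus (h^2_1 \cup \cdots \cup h^2_q)},
\]
is $H_0^*$ cut along disjoint compressing disks, hence a handlebody, and by construction $S^3 \setminus \mathrm{int}\,H$ is homeomorphic to $M$.

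The main obstacle is the technical step of arranging each $c_j$ to bound a disk in $H_0^*$: a generic handle decomposition of $M$ need not have this property. The remedy combines the flexibility of handle slides and stabilizations on $M$'s handle structure with the uniqueness (up to isotopy) of genus-$p$ Heegaard splittings of $S^3$ due to Waldhausen. This manipulation is the conceptual core of the argument; once it is achieved, the verification that $H$ is a handlebody with $S^3 \setminus \mathrm{int}\,H \cong M$ is routine.
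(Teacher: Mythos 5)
First, a point of comparison: the paper does not prove this statement at all --- it is quoted as Theorem~\ref{theo:Fox} directly from Fox's 1948 paper --- so there is no in-paper proof to measure your argument against, and it must be judged on its own terms. On those terms it has a genuine gap at its central step. You take a handle decomposition of the \emph{abstract} manifold $M$, embed the union $H_0$ of the $0$- and $1$-handles standardly in $S^3$, and then assert that ``using handle slides and, if necessary, stabilization'' one can arrange that every attaching curve $c_j$ bounds a disk in the complementary handlebody $H_0^*$. That assertion is where the entire content of the theorem lives, and as stated it is false: your construction never actually invokes the hypothesis that $M$ embeds in $S^3$ (you discard the given embedding at the outset and the embeddability of $M$ plays no role in any subsequent step), so if the handle-slide/stabilization claim held it would prove that \emph{every} compact connected orientable $3$-manifold with nonempty connected boundary embeds in $S^3$ with handlebody complement. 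The punctured lens space $L(3,1)\setminus {\rm int}\,B^3$ is a counterexample: it has a handle decomposition with one $0$-handle, one $1$-handle and one $2$-handle (attached along a $(3,1)$-curve on the boundary of the solid torus), yet it does not embed in $S^3$ at all, since by Alexander's theorem its boundary sphere would bound a ball on the other side, forcing $L(3,1)\cong S^3$. Hence no amount of handle sliding, stabilizing, or re-identifying $H_0$ can make that attaching curve bound a disk in $H_0^*$, and your second step cannot be a formal manipulation.

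The appeal to Waldhausen's uniqueness of Heegaard splittings of $S^3$ does not repair this: that theorem compares two splittings of $S^3$ itself and gives no mechanism for modifying a handle decomposition of an abstract $M$ that may not yet (or may never) sit inside $S^3$ in the desired way. Any correct proof must make essential use of the given embedding $M\subset S^3$; the standard arguments, including Fox's, do so by working with the complementary region $\overline{S^3\setminus M}$ and repeatedly re-embedding $M$ to simplify that complement, rather than by building a fresh embedding from an abstract handle structure on $M$. The peripheral parts of your proposal are fine --- a compact connected $3$-manifold with nonempty boundary does admit a decomposition with one $0$-handle and no $3$-handles, disjoint simple closed curves that individually bound disks in a handlebody can be made to bound disjoint disks by an innermost-intersection argument, $2$-handle attachment in dimension $3$ depends only on the isotopy class of the attaching circle, and cutting $H_0^*$ along the resulting disks yields handlebodies (a single one here, since $\partial M$ is connected) --- but none of these touches the real difficulty.
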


Let $F$ be a surface in $S^3$ and let $V_F$ and $W_F$ denote the components of the exterior of $F$. By Theorem~\ref{theo:Fox}, there exist handlebody-knots $H_V$ and $H_W$ such that $S^3\setminus {\rm int}\,H_V \approx V_F$ and $S^3\setminus {\rm int}\,H_W \approx W_F$, where the symbol ``$\approx$'' denotes a homeomorphism.

\begin{defi}
	We call the (unordered) pair of the above handlebody-knots $(H_V, H_W)$ a {\it handlebody-knot pair for} $F$.
\end{defi}

\begin{rem}
In general, a handlebody-knot pair for a surface in $S^3$ is not unique. There exist examples of equivalent handlebody-knots with non-homeomorphic exteriors. See \cite{Lee}, for example.
\end{rem}

For a handlebody-knot pair for a surface in $S^3$, we have the following, which is a direct consequence of the definition.
\begin{lem}
	Let $F$ be a surface in $S^3$ and let $(H_1, H_2)$ be a handlebody-knot pair for $F$.
	\begin{itemize}
		\item[$(1)$]If $F$ is unknotted, then both $H_1$ and $H_2$ are trivial handlebody-knots.
		\item[$(2)$]If $F$ is knotted, then $H_1$ or $H_2$ is a non-trivial handlebody-knot.
		\item[$(3)$]If $F$ is bi-knotted, then both $H_1$ and $H_2$ are non-trivial handlebody-knots.
	\end{itemize}
\end{lem}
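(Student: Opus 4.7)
The plan is to unwind three layers of definitions in sequence and observe that each item of the lemma is a tautological translation. The key bridge is the characterization recalled in Section~\ref{section:handlebody-knot}: a handlebody-knot $H$ is trivial if and only if its exterior $S^3\setminus\operatorname{int} H$ is homeomorphic to a handlebody. The second bridge is the definition of a handlebody-knot pair: after a suitable (re)labeling, we have $S^3\setminus\operatorname{int} H_1 \approx V_F$ and $S^3\setminus\operatorname{int} H_2 \approx W_F$, where $V_F,W_F$ are the two components of the exterior of $F$.

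First I would combine these two bridges to obtain the single clean statement: $H_i$ is trivial if and only if the component of the exterior of $F$ corresponding to $H_i$ is a handlebody. From this, each item follows immediately by invoking the definition of unknotted/knotted/bi-knotted. For (1), $F$ unknotted means both $V_F$ and $W_F$ are handlebodies, so both $H_1$ and $H_2$ are trivial. For (2), $F$ knotted means at least one of $V_F,W_F$ fails to be a handlebody, so the corresponding $H_i$ is non-trivial. For (3), $F$ bi-knotted means neither $V_F$ nor $W_F$ is a handlebody, so both $H_1$ and $H_2$ are non-trivial.

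There is essentially no obstacle here; the lemma is stated precisely as a direct consequence of the definitions, and my write-up will be just a few sentences. The only minor point of care is that the handlebody-knot pair $(H_1,H_2)$ is unordered, so in item (2) one should phrase the conclusion as ``at least one of $H_1,H_2$'' is non-trivial rather than pinning down which; I would mention this briefly for clarity and to justify the implicit relabeling used when identifying exteriors with $V_F$ and $W_F$.
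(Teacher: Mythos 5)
Your proposal is correct and matches the paper's treatment: the paper states this lemma without proof, calling it a direct consequence of the definitions, and your unwinding of the definitions (triviality of $H_i$ $\Leftrightarrow$ its exterior is a handlebody, combined with the identification of the exteriors of $H_1,H_2$ with $V_F,W_F$) is exactly the intended argument.
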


For a prime bi-knotted surface, which is one of the main subjects of our investigation in this paper, we have the following proposition for the case of genus two.

\begin{prop}\label{prop:irr-red}
	Let $F$ be a surface of genus two in $S^3$, and let $(H_1, H_2)$ be a handlebody-knot pair for $F$. If $F$ is a prime bi-knotted surface, then both $H_1$ and $H_2$ are non-trivial handlebody-knots, and one of $H_1$ and $H_2$ is irreducible, and the other is reducible.
\end{prop}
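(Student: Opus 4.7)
\medskip

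\noindent\textbf{Proof sketch proposal.}

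The first conclusion—that $H_1$ and $H_2$ are non-trivial—is immediate from the preceding lemma: $F$ being bi-knotted means neither $V_F$ nor $W_F$ is a handlebody. The core of the proposition is the reducible/irreducible dichotomy, which I plan to establish by ruling out the two symmetric alternatives: ``both reducible'' and ``both irreducible.''

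\emph{Not both reducible.} Suppose $H_1$ and $H_2$ are both reducible. Each reducing sphere, transported via the defining homeomorphism $S^3 \setminus \mathrm{int}\, H_i \approx V_F$ (resp.\ $W_F$), gives properly embedded discs $E_V\subset V_F$ and $E_W\subset W_F$ whose boundary curves $\gamma_V,\gamma_W\subset F$ are essential and separating. By primeness of $F$, neither $\gamma_V$ nor $\gamma_W$ bounds a disc on the opposite side of $F$, so they represent distinct isotopy classes on $F$. On a closed genus two surface any two disjoint essential separating simple closed curves are automatically isotopic—because after cutting along one, the complementary once-punctured tori admit no essential separating simple closed curves—so $\gamma_V$ and $\gamma_W$ must intersect. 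A cut-and-paste surgery argument on the pair $(E_V,E_W)$, using one disc to modify the other across intersections of their boundary curves, reduces $|\gamma_V\cap \gamma_W|$ step by step. Once the curves are made disjoint they are isotopic, and the resulting discs assemble into a 2-sphere in $S^3$ meeting $F$ in a single essential separating curve, i.e.\ a non-trivial isotopy sum decomposition of $F$, contradicting primeness.

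\emph{Not both irreducible.} Suppose both $H_1,H_2$ are irreducible. By Lemma~\ref{lem:irr-prime}, both $\partial H_1$ and $\partial H_2$ are prime surfaces in $S^3$. Since $F$ is bi-knotted, each of these surfaces has a non-handlebody ($V_F$ or $W_F$) on one side and the handlebody $H_i$ on the other, so each is prime, knotted, but not bi-knotted. Invoking Tsukui's characterization of primeness in genus two together with an analysis of the characteristic compression body structures of $V_F$ and $W_F$, one argues that the combination of these constraints forces one of $V_F, W_F$ to in fact be a handlebody—contradicting the bi-knotted hypothesis on $F$.

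The principal technical obstacle is the surgery in the ``not both reducible'' step, since $E_V$ and $E_W$ sit on \emph{opposite} sides of $F$; the usual innermost-disc argument must be adapted so that the essentiality and separating property of the boundary curves are preserved while $|\gamma_V \cap \gamma_W|$ is reduced. The ``not both irreducible'' step, by contrast, relies on Tsukui's primeness criterion as a black box applied to both $V_F$ and $W_F$ simultaneously.
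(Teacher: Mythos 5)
Your skeleton (non-triviality is immediate; then rule out ``both reducible'' and ``both irreducible'') is logically fine, but both halves of the dichotomy contain genuine gaps, and in each case the missing ingredient is precisely one of the two cited theorems on which the paper's own (very short) proof rests.

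For ``not both reducible'': the surgery you propose cannot be carried out. The discs $E_V \subset V_F$ and $E_W \subset W_F$ lie on opposite sides of $F$, so their interiors are disjoint; the only intersection is the finite set of points $\gamma_V \cap \gamma_W \subset F$. The usual innermost-circle/outermost-arc exchange reduces the intersection of two surfaces by cutting one along pieces of the other, and it needs genuine arcs or circles of intersection in the interiors --- here there is nothing to surger along, and no elementary move that decreases $|\gamma_V \cap \gamma_W|$ while keeping each curve bounding a disc on its own side. What you are trying to prove (both exteriors $\partial$-reducible implies $F$ non-prime) is exactly one direction of Tsukui's theorem (Theorem~\ref{theo:Tsukui}, fed through Corollary~\ref{cor:irr-indecomp}), which is the content of an entire cited paper; the paper simply quotes it to conclude that at least one $H_i$ is irreducible. (Your reduction to the disjoint case is correct as far as it goes: a reducing disc is automatically separating, disjoint essential separating curves on a genus-two surface are isotopic, and a sphere meeting $F$ in a single essential separating curve exhibits $F$ as an isotopy sum of two tori. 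The entire difficulty, which your sketch does not resolve, is making the two curves disjoint.)

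For ``not both irreducible'': your sketch gestures at ``characteristic compression body structures'' forcing one of $V_F, W_F$ to be a handlebody, but no argument is given, and that is not even the contradiction one should aim for. The actual input is the Fox--Homma theorem (Theorem~\ref{theo:Fox-Homma}): for any $F$ not equivalent to $S^2$, at least one of $V_F$, $W_F$ fails to be $\partial$-irreducible. Combined with the chain ($\partial$-irreducible $\Leftrightarrow$ $\pi_1$ freely indecomposable $\Leftrightarrow$ the associated handlebody-knot is irreducible, via Corollary~\ref{cor:irr-indecomp}), this yields Corollary~\ref{cor:exist-redu}: at least one of $H_1, H_2$ is reducible. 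Without Fox--Homma or an equivalent, this half does not close. In summary, the paper's argument is: Tsukui gives one irreducible $H_i$, Fox--Homma gives one reducible $H_j$, and these two facts together are the whole proposition; your proposal would need to import both results at exactly the points where it currently has placeholders.
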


\begin{figure}[ht]
		\includegraphics[width=12cm]{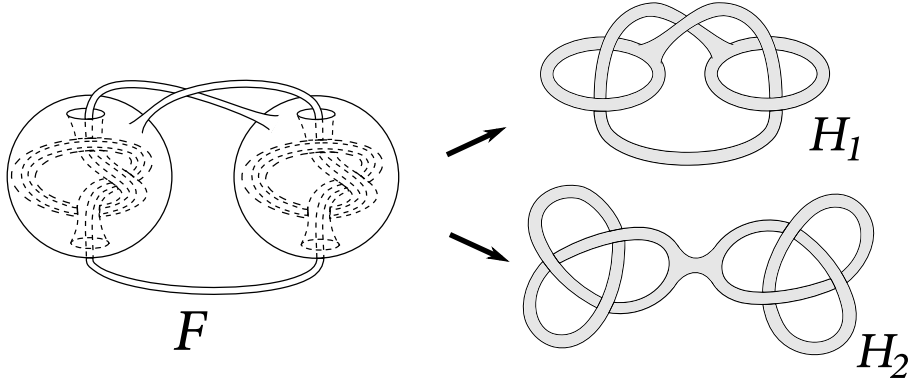}
		\caption{An example of a handlebody-knot pair for a prime bi-knotted surface of genus two}
		\label{h.b.knot-pair}
\end{figure}

The handlebody-knot pair $(H_1,H_2)$ in Figure~\ref{h.b.knot-pair} is an example of a handlebody-knot pair for Homma's example. One can easily observe that $H_1$ is irreducible, while $H_2$ is reducible.
 We will prove Proposition~\ref{prop:irr-red} by using the following two theorems due to Tsukui, Fox and Homma, and their corollaries.

\begin{theo}[Tsukui \cite{Tsukui-2}]\label{theo:Tsukui}
	Let $F$ be a surface of genus two in $S^3$ and let $V_F$ and $W_F$ denote the components of the exterior of $F$. Then $F$ is prime if and only if either the fundamental group $\pi_1 (V_F)$ or $\pi_1 (W_F)$ is indecomposable with respect to free products.
\end{theo}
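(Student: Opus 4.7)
The plan is to prove Tsukui's theorem via contrapositives in both directions. Write $V := V_F$ and $W := W_F$ for the two exterior components; both are compact orientable irreducible $3$-manifolds with connected genus-$2$ boundary $F$, irreducibility following from Alexander's theorem that every $2$-sphere in $S^3$ bounds a $3$-ball.

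For the ``if'' direction (some $\pi_1$ indecomposable implies $F$ prime), we argue the contrapositive: if $F$ is not prime then both $\pi_1(V)$ and $\pi_1(W)$ are non-trivial free products. Write $F = F_1 \sharp F_2$ with neither $F_i$ equivalent to $S^2$, using the splitting ball $B^3$ from the definition. The $2$-sphere $\partial B^3$ meets $V$ in a properly embedded separating disk, and Van Kampen's theorem applied to this decomposition yields $\pi_1(V) \cong \pi_1(V_{F_1}) * \pi_1(V_{F_2})$; both factors are non-trivial because neither $F_i$ is a $2$-sphere (so by Alexander duality each of its exterior components has non-trivial first homology, hence non-trivial fundamental group). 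The symmetric argument applied on the other side of $F$ gives $\pi_1(W)$ as a non-trivial free product.

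For the ``only if'' direction we again use the contrapositive: assuming $\pi_1(V) = A_1 * A_2$ and $\pi_1(W) = B_1 * B_2$ are both non-trivial free products, we construct a $2$-sphere $S \subset S^3$ meeting $F$ in a single essential simple closed curve, thereby exhibiting $F$ as an isotopy sum. First, the Loop Theorem combined with the Kneser--Stallings theory of free-product decompositions in irreducible $3$-manifolds produces an essential compressing disk in $V$, and using that both factors $A_1$, $A_2$ are non-trivial we upgrade this to a \emph{separating} compressing disk $D_V \subset V$ whose boundary is an essential separating simple closed curve on $F$; similarly we produce $D_W \subset W$. After putting $D_V, D_W$ in general position and reducing $D_V \cap D_W$ by standard innermost-circle and outermost-arc surgeries (valid because $V$ and $W$ are irreducible), we may assume $D_V \cap D_W = \emptyset$. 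Then $\partial D_V$ and $\partial D_W$ are disjoint essential separating simple closed curves on the genus-$2$ surface $F$; any two such curves are isotopic, since in either once-punctured-torus component of $F \setminus \partial D_V$ the only essential separating curves are boundary-parallel. A small ambient isotopy arranges $\partial D_V = \partial D_W$, and $S := D_V \cup D_W$ is the required $2$-sphere.

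The main obstacle is the step that upgrades the Loop Theorem's compressing disk to a \emph{separating} one. A non-separating compressing disk corresponds only to a $\mathbb{Z}$ free factor when one cuts $V$ along it, so it is the combined hypothesis that both free-product factors are non-trivial, together with the genus-$2$ restriction and the prime-disk decomposition theory for irreducible $3$-manifolds with boundary, that forces a separating disk to exist. Coordinating the boundaries of the two separating disks on $F$ via the genus-$2$ classification of essential separating simple closed curves is the second most delicate point, and is where the precise value of the genus is used in an essential way.
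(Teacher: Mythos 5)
First, a point of comparison: the paper does not prove this statement at all --- it is quoted as a theorem of Tsukui \cite{Tsukui-2} and used as a black box --- so your proposal has to stand on its own. Your ``if'' direction (contrapositive: $F$ not prime implies both groups decompose) is essentially correct: the sphere $\partial B^3$ meets each exterior component of $F_1\sharp F_2$ in a single properly embedded separating disk (the belt disk $h(D^2\times\{0\})$ of the tube on one side, its complement in $\partial B^3$ on the other), van Kampen gives the two free-product decompositions, and Alexander duality (each complementary component of a genus $g\geq 1$ surface has $H_1\cong\mathbb{Z}^g\neq 0$) shows all four factors are nontrivial since neither $F_i$ is a sphere.

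The ``only if'' direction, however, has a genuine gap at exactly the point where the theorem's content lives. Your disks $D_V\subset V_F$ and $D_W\subset W_F$ lie in the two \emph{disjoint} complementary components, which meet only along $F$ itself; hence $D_V\cap D_W\subset\partial D_V\cap\partial D_W\subset F$, and there are no circles or arcs of intersection inside a $3$-manifold on which to perform the innermost-circle or outermost-arc surgeries you invoke. The real difficulty is that $\partial D_V$ and $\partial D_W$ are two essential separating curves on the closed genus-two surface $F$ that may intersect essentially (two such curves can have minimal geometric intersection number $4$, for instance), and no general-position move removes these intersections: one curve bounds a disk only in $V_F$ and the other only in $W_F$, so neither disk can be surgered along the other. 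Showing that compressing disks on the two sides can be \emph{chosen} with disjoint boundaries is the analogue of passing from ``weakly reducible'' to ``reducible'' for Heegaard splittings, and it is the substantive content of Tsukui's paper; your outline silently assumes it. (Your observation that two disjoint essential separating simple closed curves on a genus-two surface are isotopic is correct and is the right final step --- but only after disjointness has been established.) Secondarily, the ``upgrade'' from a possibly non-separating compressing disk to an essential separating one is asserted rather than proved; it can be done --- band-sum two parallel copies of a non-separating compressing disk along a suitably chosen arc in $F$, noting that the resulting boundary curve is null-homologous, hence separating, and that the arc can be chosen to make it essential --- but the Kneser--Stallings theory as you cite it does not hand you a separating disk directly.
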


\begin{cor}\label{cor:irr-indecomp}
	Let $H$ be a genus two handlebody-knot. Then $H$ is irreducible if and only if $\pi_1 (S^3\setminus {\rm int}\, H)$ is indecomposable with respect to free products.
\end{cor}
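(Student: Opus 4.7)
The plan is to combine Lemma~\ref{lem:irr-prime} with Tsukui's criterion (Theorem~\ref{theo:Tsukui}), exploiting the elementary observation that the fundamental group of a genus two handlebody is itself a non-trivial free product.

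First I would identify the relevant surface and its exterior components. Since $H$ is a genus two handlebody-knot, $\partial H$ is a genus two surface in $S^3$, and the two components of its exterior are homeomorphic to $H$ itself and to $S^3 \setminus {\rm int}\, H$, respectively. Applying Lemma~\ref{lem:irr-prime}, the handlebody-knot $H$ is irreducible if and only if $\partial H$ is prime. Applying Theorem~\ref{theo:Tsukui} to $\partial H$, primeness of $\partial H$ is equivalent to at least one of $\pi_1(H)$ or $\pi_1(S^3 \setminus {\rm int}\, H)$ being indecomposable with respect to free products.

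The final step is to eliminate one of the two disjuncts. Since $H$ is a genus two handlebody, $\pi_1(H)$ is the free group of rank two, and thus $\pi_1(H) \cong \mathbb{Z} * \mathbb{Z}$ is decomposable as a non-trivial free product. Hence primeness of $\partial H$ reduces to indecomposability of $\pi_1(S^3 \setminus {\rm int}\, H)$ alone, which is exactly the statement of the corollary.

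There is essentially no genuine obstacle: the argument is a direct three-line deduction from the two cited results, and the only substantive observation is the triviality that the free group of rank two is itself a non-trivial free product, which always kills the inner disjunct in Tsukui's criterion.
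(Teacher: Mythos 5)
Your argument is correct and is exactly the deduction the paper intends: the paper simply states that the corollary "follows directly from Theorem~\ref{theo:Tsukui} and Lemma~\ref{lem:irr-prime}", and you have filled in the only substantive detail, namely that $\pi_1(H)\cong\mathbb{Z}*\mathbb{Z}$ is always decomposable, so Tsukui's disjunction collapses to the exterior side.
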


Corollary~\ref{cor:irr-indecomp} follows directly from Theorem~\ref{theo:Tsukui} and Lemma~\ref{lem:irr-prime}.

A connected compact 3-dimensional manifold $M$ whose boundary is connected and non-empty is said to be \emph{$\partial$-irreducible}, if for every 2-disc $D^2$ properly embedded in $M$, $\partial D^2$ bounds a 2-disc on $\partial M$.

\begin{theo}[Fox \cite{Fox}, Homma \cite{Homma}]\label{theo:Fox-Homma}
	Let $F$ be a surface in $S^3$ and let $V_F$ and $W_F$ denote the components of the exterior of $F$. If $F$ is not equivalent to $S^2$, then at least one of $V_F$ and $W_F$ is not $\partial$-irreducible.
\end{theo}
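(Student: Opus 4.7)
The plan is to argue by contradiction: assume both $V_F$ and $W_F$ are $\partial$-irreducible and derive a contradiction from the simple connectivity of $S^3$.

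Step one is to translate $\partial$-irreducibility into injectivity at the level of fundamental groups via the Loop Theorem. Since $F$ is not equivalent to $S^2$, the classification of surfaces forces $F$ to have genus at least one, so $\pi_1(F)$ is non-trivial and every essential simple closed curve on $F$ represents a non-trivial element of $\pi_1(F)$. If the inclusion-induced map $i_*\colon \pi_1(F)\to\pi_1(V_F)$ had non-trivial kernel, the Loop Theorem of Papakyriakopoulos applied to the compact $3$-manifold $V_F$ would produce a properly embedded disk $D\subset V_F$ whose boundary is an essential simple closed curve on $\partial V_F=F$. Such a $\partial D$ does not bound a disk on $F$, contradicting $\partial$-irreducibility of $V_F$. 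Thus $i_*$ is injective, and the identical argument gives injectivity of $j_*\colon \pi_1(F)\to\pi_1(W_F)$.

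Step two is a Van Kampen computation. Thickening $F$ to an open collar and writing $S^3$ as the union of open neighborhoods of $V_F$ and $W_F$ meeting in a set homotopy equivalent to $F$, Van Kampen's theorem yields
\[
\pi_1(S^3)\ \cong\ \pi_1(V_F)*_{\pi_1(F)}\pi_1(W_F).
\]
Since both amalgamating homomorphisms $i_*$ and $j_*$ are injective and $\pi_1(F)\neq 1$, the normal form theorem for amalgamated free products ensures that the right-hand side contains a copy of $\pi_1(F)$ as a subgroup and is therefore non-trivial. This contradicts $\pi_1(S^3)=1$, completing the argument.

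The main technical point is the invocation of the Loop Theorem, which requires the exteriors to be compact $3$-manifolds with non-empty boundary identified with $F$; this holds by construction once the regular neighborhood collar is fixed. After that, the reasoning is essentially algebraic, so I do not anticipate serious obstacles beyond keeping the identifications straight.
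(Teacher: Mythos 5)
Your argument is correct. Note that the paper itself offers no proof of this statement—it is quoted as a classical theorem of Fox and Homma—so there is no in-paper argument to compare against; what you have written is the standard modern proof, and it goes through. The contrapositive structure (both exteriors $\partial$-irreducible $\Rightarrow$ both inclusions $\pi_1(F)\to\pi_1(V_F)$, $\pi_1(F)\to\pi_1(W_F)$ injective by the Loop Theorem/Dehn's Lemma $\Rightarrow$ $\pi_1(S^3)\cong\pi_1(V_F)*_{\pi_1(F)}\pi_1(W_F)$ nontrivial by the normal form theorem) is exactly right, and you correctly use only the one direction of the ``$\partial$-irreducible iff $\pi_1$-injective'' equivalence that the Loop Theorem supplies. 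One small point of justification: to pass from ``$F$ is not equivalent to $S^2$'' to ``$F$ has genus at least one'' you need more than the classification of surfaces—you need Alexander's theorem that every PL $2$-sphere in $S^3$ is standard, so that a genus-zero surface is automatically \emph{equivalent} to $S^2$ and not merely homeomorphic to it. The paper records this fact in the remark following the definition of bi-knotted surfaces, so the gap is cosmetic, but the citation should be to Alexander rather than to the abstract classification. A second, equally minor point: when you conclude that $\partial D$ ``does not bound a disk on $F$,'' you are implicitly using that a simple closed curve on a closed surface bounds a disk if and only if it is null-homotopic; this is standard but worth naming, since the paper's definition of $\partial$-irreducibility is phrased in terms of bounding disks rather than in terms of $\pi_1$.
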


In fact, for a handlebody-knot $H$ whose genus is greater than or equal to two, $S^3\setminus {\rm int}\, H$ is $\partial$-irreducible if and only if $\pi_1 (S^3\setminus {\rm int}\, H)$ is indecomposable with respect to free products (c.f. \cite{Suzuki}). Thus, the following corollary can be obtained by Theorem~\ref{theo:Fox-Homma} and Corollary~\ref{cor:irr-indecomp}.

\begin{cor}\label{cor:exist-redu}
	Let $F$ be a genus two surface in $S^3$, and let $(H_1,H_2)$ be a handlebody-knot pair for $F$. Then, at least one of $H_1$ and $H_2$ is reducible.
\end{cor}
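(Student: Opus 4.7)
The plan is to chain together the three ingredients assembled immediately before the statement. Let $(H_1, H_2)$ be a handlebody-knot pair for $F$, and let $V_F$ and $W_F$ denote the two components of the exterior of $F$, with $S^3 \setminus \mathrm{int}\, H_1 \approx V_F$ and $S^3 \setminus \mathrm{int}\, H_2 \approx W_F$. Since $F$ has genus two, in particular $F$ is not equivalent to $S^2$, so Theorem~\ref{theo:Fox-Homma} applies: at least one of $V_F$ and $W_F$ fails to be $\partial$-irreducible. Since the pair $(H_1, H_2)$ is unordered, I may assume without loss of generality that $V_F$ is the one which is not $\partial$-irreducible, and hence that $S^3 \setminus \mathrm{int}\, H_1$ is not $\partial$-irreducible.

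Next I would invoke the fact cited just before the corollary: for a handlebody-knot $H$ of genus at least two, $S^3 \setminus \mathrm{int}\, H$ is $\partial$-irreducible if and only if $\pi_1(S^3 \setminus \mathrm{int}\, H)$ is indecomposable with respect to free products. Since $F$ has genus two, the handlebody-knot $H_1$ also has genus two, so this equivalence is available. The failure of $\partial$-irreducibility of $S^3 \setminus \mathrm{int}\, H_1$ therefore forces $\pi_1(S^3 \setminus \mathrm{int}\, H_1)$ to decompose nontrivially as a free product.

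Finally, I would apply Corollary~\ref{cor:irr-indecomp} contrapositively: since $\pi_1(S^3 \setminus \mathrm{int}\, H_1)$ is decomposable with respect to free products, $H_1$ is not irreducible, i.e.\ $H_1$ is reducible. This gives at least one reducible handlebody-knot in the pair $(H_1,H_2)$, completing the proof.

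There is no real obstacle here; the argument is essentially a dictionary translation using the three preceding results. The only point requiring a moment of care is keeping the correspondence $H_i \leftrightarrow$ exterior component straight when invoking Theorem~\ref{theo:Fox-Homma}, but since handlebody-knot pairs are unordered this amounts to a harmless relabeling.
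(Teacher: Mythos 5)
Your argument is correct and is exactly the chain the paper intends: Theorem~\ref{theo:Fox-Homma} gives a component that is not $\partial$-irreducible, the stated equivalence between $\partial$-irreducibility and free indecomposability of $\pi_1$ for genus $\geq 2$ handlebody-knot exteriors translates this into decomposability, and the contrapositive of Corollary~\ref{cor:irr-indecomp} yields reducibility. The paper leaves these steps implicit, merely citing the two results, so your write-up simply makes the same deduction explicit.
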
 

\begin{proof}[Proof of Proposition~\ref{prop:irr-red}]
	Since $F$ is bi-knotted, both $H_1$ and $H_2$ are non-trivial. Let $V_F$ and $W_F$ denote the components of the exterior of $F$ such that $V_F \approx S^3\setminus {\rm int}\,H_1$ and $W_F \approx S^3\setminus {\rm int}\,H_2$. Since $F$ is prime, either $\pi_1 (V_F)$ or $\pi_1 (W_F)$ is indecomposable with respect to free products by Theorem~\ref{theo:Tsukui}. We may assume that $\pi_1 (V_F)\,(\cong \pi_1(S^3\setminus {\rm int}\,H_1 ))$ is indecomposable. Then $H_1$ is irreducible by Corollary~\ref{cor:irr-indecomp}. Therefore, by Corollary~\ref{cor:exist-redu}, $H_2$ is reducible. 
\end{proof}

\section{Main theorems}\label{section:main theorem}

So far, we have considered surfaces in $S^3$, especially prime bi-knotted surfaces and their handlebody-knot pairs. In this section, conversely, given two handlebody-knots, we consider the problem whether we can realize them as a handlebody-knot pair for a prime bi-knotted surface in $S^3$, or not. Our main theorem is a partial solution to this problem. We first give some definitions to explain our main theorem. 

\begin{defi}
	Let $G$ be a handcuff graph embedded in $S^3$. Let $v_1,v_2$ be the vertices and $l_1,l_2,\alpha$ be the edges of $G$ such that $l_1,l_2$ are loops, and $\alpha$ is an arc which connects $v_1$ and $v_2$. In particular, we have $l_1 \cup \alpha \cup l_2=G$ and $l_s \cap \alpha =\{v_s\}$, $s=1,2$. Note that $L = l_1 \cup l_2$ is a 2-component link.
	\begin{itemize}
		\item[(1)]We say that a handlebody-knot $H$ satisfies the \emph{property ${\widetilde {\rm T}}$} if $H$ is equivalent to a regular neighborhood of a spatial handcuff graph $G$ such that the associated link $L$ of $G$ is a trivial link.
		\item[(2)]We say that a handlebody-knot $H$ satisfies the \emph{property {\rm T}} if $H$ is equivalent to a regular neighborhood of a spatial handcuff graph $G$ such that at least one of the loops $l_1$ and $l_2$ of $G$ is a trivial knot.
	\end{itemize}
\end{defi} 

\begin{figure}[ht]
		\includegraphics[width=10cm]{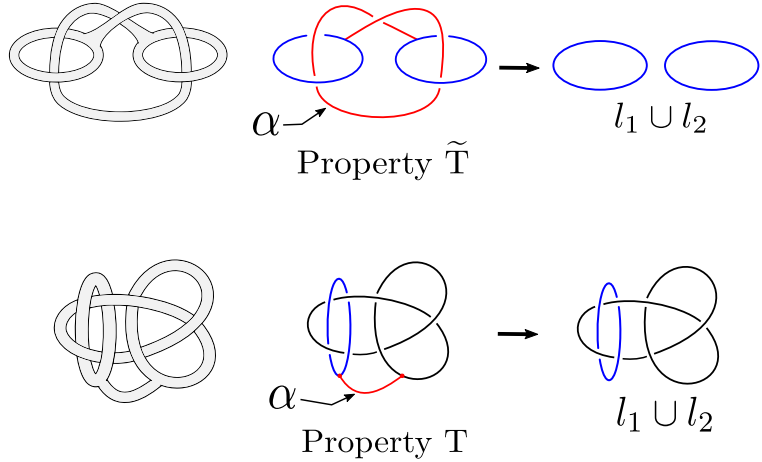}
		\caption{Property ${\widetilde {\rm T}}$ and T}
		\label{property-T}
\end{figure}

See Figure~\ref{property-T} for explicit examples. Note that a handlebody-knot which satisfies the property ${\widetilde {\rm T}}$ satisfies the property T. The following is our main theorem of this paper.

\begin{theo}\label{theo:main}
	$(1)$ Let $H_1$ be a genus two irreducible handlebody-knot which satisfies the property ${\widetilde {\rm T}}$, and $H_2$ be a genus two reducible handlebody-knot which is non-trivial. Then there exists a prime bi-knotted surface $F$ such that $(H_1,H_2)$ is a  handlebody-knot pair for $F$.\\
	\\				
	$(2)$ Let $H_1$ be a genus two irreducible handlebody-knot which satisfies the property {\rm T}, and $H_2$ be a genus two reducible handlebody-knot such that $H_2 = T_1 \natural T_2$, where $T_1$ is a regular neighborhood of a non-trivial knot, $T_2$ is a trivial solid torus, and $T_1 \natural T_2$ denotes the boundary connected sum of $T_1$ and $T_2$ in $S^3$. Then there exists a prime bi-knotted surface $F$ such that $(H_1,H_2)$ is a handlebody-knot pair for $F$.
\end{theo}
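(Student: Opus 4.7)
The plan is to prove both parts of Theorem~\ref{theo:main} by explicit construction of $F$. Equivalently, I seek a homeomorphism $\phi \colon \partial M_1 \to \partial M_2$ between the genus two boundaries of $M_i := S^3 \setminus {\rm int}\, H_i$ such that $M_1 \cup_\phi M_2 \cong S^3$; the image of the common boundary under this identification is then the desired $F$. I describe the construction for Part (1); Part (2) is handled analogously, with the matching performed only at the single trivial loop of the handcuff graph.

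Using property $\widetilde{\rm T}$, I would first realize $H_1 = N(G_1)$ for a handcuff graph $G_1 = l_1 \cup \alpha \cup l_2$ with $L = l_1 \cup l_2$ trivial, and pick disjoint 2-discs $D_1, D_2 \subset S^3$ with $\partial D_i = l_i$; pushing these off $\partial H_1$ produces essential compressing discs $\hat D_1, \hat D_2 \subset M_1$ whose boundaries are longitudes $\lambda_i$ of the handles of $\partial H_1$ around $l_i$. Using reducibility of $H_2$, I would write $H_2 = T_1 \natural T_2$ with $T_i = N(K_i)$; the reducing sphere meets $H_2$ in a disc $D_0$ whose boundary separates $\partial H_2$ into once-punctured tori $\Sigma_i := \partial T_i \setminus {\rm int}\, D_0$, each carrying a meridian $\mu(K_i)$ of $T_i$. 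I then define $\phi$ by matching $\lambda_i$ on $\partial H_1$ to $\mu(K_i)$ on $\Sigma_i \subset \partial H_2$, extending consistently over the rest of the surface.

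Once the gluing is in place, I would verify that $M_1 \cup_\phi M_2 \cong S^3$. The crucial point is that the compressing discs $\hat D_i$ in $M_1$ pair up across $F$ with the meridian discs of $T_i$ (which lie on the $H_2$-side of $\partial H_2$ and are "seen" from $M_2$ as attaching data), producing a handle decomposition of the glued manifold that collapses to $S^3$. Bi-knottedness of $F$ then follows from non-triviality of $H_1$ and $H_2$, and primeness follows from Theorem~\ref{theo:Tsukui} and Corollary~\ref{cor:irr-indecomp}, since $H_1$ irreducible implies $\pi_1(V_F)$ is indecomposable. For Part (2), the same construction is performed at the single trivial loop (matched to the knotted factor $T_1 = N(K)$), while the other, possibly knotted, loop of $G_1$ is matched to the trivial factor $T_2$ via a canonical identification.

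The main obstacle is the explicit verification that the gluing produces $S^3$ rather than some other closed 3-manifold. The most delicate point is the fundamental group computation via the amalgamated product $\pi_1(M_1) *_{\pi_1(\partial H_1)} \pi_1(M_2)$: the chosen matching of longitudes to meridians must be shown to kill precisely the generators needed to trivialize the group, yielding $S^3$. The trivial link condition $\widetilde{\rm T}$ (respectively, the single-trivial-loop condition T) is essential here, since it guarantees the existence of the compressing discs $\hat D_i$ on the $M_1$ side whose boundaries match the meridians $\mu(K_i)$ on the $M_2$ side; the asymmetric form of $H_2$ in Part (2) is precisely what the weaker hypothesis on $H_1$ allows to be absorbed.
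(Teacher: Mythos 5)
Your approach breaks down at its very first step, and the ``main obstacle'' you flag at the end is not a technicality but the entire content of the theorem. First, the compressing discs $\hat D_1,\hat D_2\subset M_1$ you propose do not exist: since $H_1$ is assumed irreducible, Corollary~\ref{cor:irr-indecomp} together with the remark preceding Corollary~\ref{cor:exist-redu} shows that $M_1=S^3\setminus{\rm int}\,H_1$ is $\partial$-irreducible, so it admits no essential compressing disc at all. The discs $D_1,D_2$ bounded by the trivial loops $l_1,l_2$ necessarily meet the rest of the graph $G_1$ (the arc $\alpha$ and possibly the other loop) in interior points, so they cannot be pushed off $H_1$ into its exterior; property ${\widetilde{\rm T}}$ only controls the link $l_1\cup l_2$, not its position relative to $\alpha$. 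Second, even granting some gluing $\phi$, the step ``verify that $M_1\cup_\phi M_2\cong S^3$'' is exactly what a proof must supply, and your proposed route through the fundamental group cannot succeed: the boundary surface group does not inject into $\pi_1(M_2)$ (so you do not have an amalgamated free product), and even a successful computation of $\pi_1=1$ would only identify the glued manifold as a homotopy sphere, not produce a specific surface $F\subset S^3$ whose complementary pieces are the given exteriors.

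The paper avoids both problems by never leaving $S^3$. It takes the disc $D$ bounded by the trivial loop $l_1$, records the $k$ transverse intersection points of ${\rm int}\,D$ with $G$, and replaces a ball neighborhood $N(D)$ by a ball $B^3$ containing a $1$-tangle $K_a$ representing the knot $K$ (where $H_2=N(K)\natural T_2$), re-routing the $k$ strands of $G$ through parallel arcs $\beta_1,\dots,\beta_k$ inside a tube $V(K_a)$ around the tangle. The surface $F$ is the boundary of $W'=E(K_a)\cup N(G')$ for the resulting graph $G'$: one side of $F$ is visibly the tangle exterior boundary-summed with a solid torus, hence homeomorphic to $S^3\setminus{\rm int}\,H_2$, and the other side is homeomorphic to $S^3\setminus{\rm int}\,N(G)\approx S^3\setminus{\rm int}\,H_1$ because collapsing the tangle ball back to $N(D)$ does not change the complement. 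Primeness and bi-knottedness then follow exactly as in your last paragraph, which is the only part of your proposal that matches the paper. To repair your argument you would need to replace the glue-and-verify strategy by some such explicit ambient construction.
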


\begin{rem}
Recall that for two genus two handlebody-knots which constitute a handlebody-knot pair for a prime bi-knotted surface in $S^3$, it is necessary that both handlebody-knots are non-trivial, and one is irreducible, and the other is reducible by Proposition~\ref{prop:irr-red}.
\end{rem}

\begin{proof}[Proof of Theorem~\ref{theo:main}]
	We will prove part (2) of the theorem. Part (1) can be proved by a similar argument.
	
	Let us first construct a surface $F$ such that $(H_1,H_2)$ is a handlebody-knot pair for $F$. Since a handlebody-knot $H_1$ satisfies the property T, there exists a handcuff graph $G$ embedded in $S^3$ such that $G$ has a trivial knot part, and that regular neighborhood of $G$ is equivalent to $H_1$. Set $G=l_1\cup \alpha\cup l_2$, where $l_1$ and $l_2$ are loops, and $\alpha$ is a simple arc as above. We may assume that $l_1$ is a trivial knot. Then, there exists a 2-disc $D^2$ embedded in $S^3$ such that $\partial D^2 =l_1$. Let $N(D^2)$ be a small regular neighborhood of $D^2$ in $S^3$.  By general position arguments, we may assume that the intersections of ${\rm int}\,D^2$ and $G$ are transverse. Then, we may assume that the intersections of $\partial N(D^2)$ and $G$ are also transverse. So we can set
	 \begin{eqnarray*}
	 	{\rm int}\,D^2 \cap G & = &\{ x_1,x_2,\ldots ,x_k\},\\	 	
	 	\partial N(D^2) \cap G & = & \{\hat{x}_1,\hat{x}_2,\ldots ,\hat{x}_k,\check{x}_1,\check{x}_2,\ldots ,\check{x}_k,p\}
	 \end{eqnarray*}
for some non-negative integer $k$, where $\hat{x}_j$ and $\check{x}_j$ are the end points of the arc component of $N(D^2)\cap G$ containing $x_j$, $j=1, 2, \ldots ,k$, and $p \in \alpha\setminus \{ v_1, v_2\}$ is the end point of the component of $N(D^2)\cap \alpha$ containing the vertex $v_1$ of $G$. We may assume that $\hat{x}_1, \hat{x}_2, \ldots ,\hat{x}_k$ lie in the ``upper side'' of $D^2$, while $\check{x}_1, \check{x}_2, \ldots ,\check{x}_k$ lie in the ``lower side'' of $D^2$. See Figures~\ref{disc-arcs} and \ref{nbd-disc}.
	
	\begin{figure}[ht]
			\includegraphics[width=7cm]{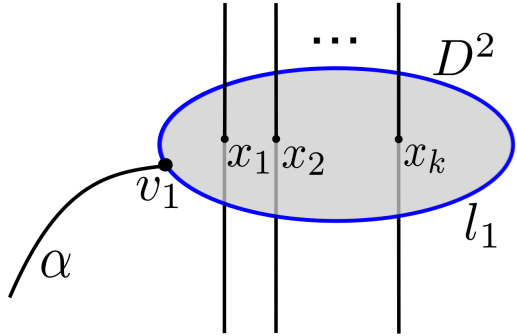}
        	\caption{Trivial knot part of $G$ and $D^2$}
        	\label{disc-arcs}
	\end{figure}
	
	\begin{figure}[ht]
			\includegraphics[width=7cm]{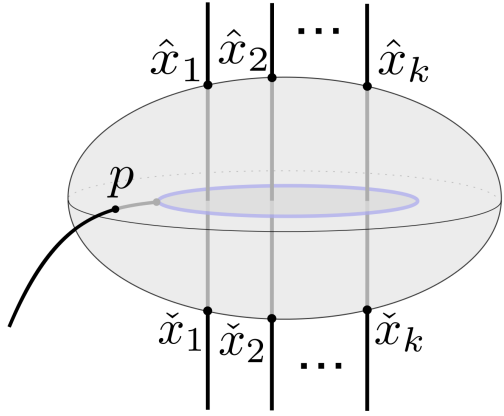}
        	\caption{Intersection of $N(D^2)$ and $G$}
        	\label{nbd-disc}
	\end{figure}   			
	
	Recall that the handlebody-knot $H_2$ is the boundary connected sum of $T_1$ and $T_2$, where $T_1$ is a regular neighborhood of a non-trivial knot and $T_2$ is a trivial solid torus. We denote by $K$ the non-trivial knot in $S^3$ whose regular neighborhood coincides with $T_1$. Let $K_a$ be a 1-tangle in a 3-ball $B^3\subset S^3$ such that for a simple arc $b$ on $\partial B^3$ which connects the two end points of $K_a$, $K_a \cup b$ is equivalent to $K$. Let $h : D^2 \times [-1,1] \to B^3$ be an embedding such that $h(D^2 \times [-1,1])\cap \partial B^3 = h(D^2 \times \{ -1,1 \})$ and $h(\{ 0\} \times [-1,1])=K_a$, where $h(D^2 \times \{1\})$ contains $\hat{x_1},\hat{x}_2, \ldots ,\hat{x}_k$, $h(D^2 \times \{-1\})$ contains $\check{x}_1, \check{x}_2, \ldots ,\check{x}_k$, and $0 \in D^2$ is the center of $D^2$. Set $V(K_a)=h(D^2\times [-1,1])$, which is a regular neighborhood of $K_a$ in $B^3$. Set
	\begin{eqnarray*}		
		T(K_a)=(\partial B^3\setminus V(K_a))\cup (\partial V(K_a)\setminus h({\rm int}\,D^2 \times \{-1,1\})),
	\end{eqnarray*}
which is a torus embedded in $B^3 \subset S^3$. Let $q_1,q_2, \ldots ,q_k$ be distinct points of ${\rm int}\,D^2$. Let $\beta_1,\beta_2,\ldots,\beta_k$ be the simple arcs properly embedded in $V(K_a)$ given by $\beta_j = h(\{ q_j \} \times [-1,1])$ for $j=1,2,\ldots ,k$. See Figure~\ref{surface-arcs} for details, where the figure on the right hand side depicts $T(K_a)$ and $\beta_1,\beta_2,\ldots,\beta_k$ seen from a horizontal direction.

	\begin{figure}[ht]
			\includegraphics[width=9cm]{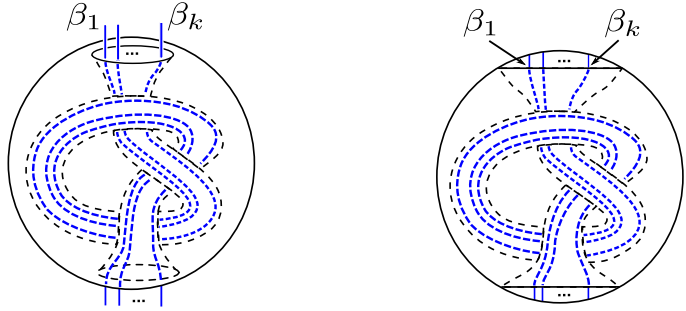}
			\caption{An example of $T(K_a)$ and $\beta_1,\beta_2,\ldots,\beta_k$}
			\label{surface-arcs}
	\end{figure}

	Finally, we replace $N(D^2)$ with $B^3$, i.e. we attach $B^3$ to $S^3\setminus {\rm int}\,N(D^2)$ by using an orientation reversing homeomorphism $f:\partial B^3 \to \partial (S^3\setminus {\rm int}\,N(D^2))$ such that $f(h(q_j,1))=\hat{x}_j$ and $f(h(q_j,-1))=\check{x}_j$, $j=1,2,\ldots ,k$ (see Figure~\ref{changing}). Let $E(K_a)$ be the closure of $B^3\setminus V(K_a)$. Set 
	\begin{eqnarray*}
	G' & = & (G\setminus {\rm int}\,N(D^2))\cup (\beta_1 \cup \beta_2 \cup \cdots \cup \beta_k),\,{\rm and}\\
	W' & = & E(K_a) \cup N(G'), 
	\end{eqnarray*}
where $N(G')$ is a small regular neighborhood of $G'$. Then, the boundary of $W'$ is the required surface $F$. Let $W_F$ denote the component of the exterior of $F$ which contains $\beta_1,\beta_2,\ldots,\beta_k$, and let $V_F$ denote the other component. Then, we see easily that $W_F$ is homeomorphic to $W'$ and to the boundary connected sum of $E(K_a)$ and a solid torus. Hence, $W_F$ is homeomorphic to $S^3\setminus {\rm int}\,H_2$. Furthermore, we also see easily that $S^3\setminus (E(K_a)\cup G')$ is homeomorphic to $S^3\setminus G$. Hence, $V_F$ is homeomorphic to $S^3\setminus {\rm int}\,H_1$.

	\begin{figure}[ht]
			\includegraphics[width=12cm]{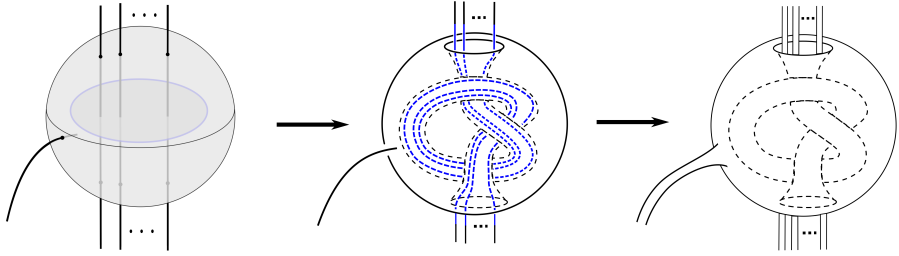}
			\caption{Replacing $N(D^2)$ with $B^3$ and taking the boundary of $W'$}
			\label{changing}
	\end{figure}
	
	Now let us check that the surface $F$ constructed above is prime and bi-knotted. Since $H_1$ is a genus two irreducible handlebody-knot, $\pi_1 (V_F)$ is indecomposable with respect to free products by Corollary~\ref{cor:irr-indecomp}. Hence $F$ is a prime surface by Theorem~\ref{theo:Tsukui}. Furthermore, since neither $\pi_1 (V_F)$ nor $\pi_1 (W_F)$ is a free group, we conclude that $F$ is bi-knotted. This completes the proof of Theorem~\ref{theo:main}.
\end{proof}

	Figures~\ref{main-example-2} and \ref{main-example} give examples of surfaces constructed in the proof of Theorem~\ref{theo:main}. Note that Homma's example (Figure~\ref{homma-sf}) can be obtained by the above construction for the case of (1). 
	
\begin{rem}
	It is probable that our construction leads to an infinite family of surfaces that have $(H_1, H_2)$ as a handlebody-knot pair. Note that the same construction method works even if we replace the homeomorphism $h : D^2 \times [-1,1] \to V(K_a)$. For every integer $n \in \mathbb{Z}$, let $h_n : D^2 \times [-1,1] \to V(K_a)$ be the homeomorphism obtained from $h$ by applying the $n$ times full twist along $D^2 \times \{0\}$. It yields an infinite family of surfaces $\{F_n\}$ that have $(H_1, H_2)$ as a handlebody-knot pair, although we do not know if $F_i$ is not equivalent to $F_j$ for $i \neq j$.
\end{rem}

\begin{figure}[ht]
		\includegraphics[width=10cm]{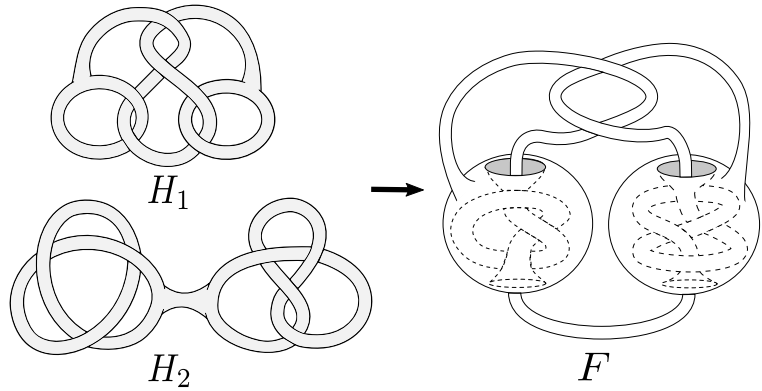}
		\caption{Realizing handlebody-knots $H_1$ and $H_2$ as a handlebody-knot pair for a prime bi-knotted surface $F$ (Theorem~\ref{theo:main}\,(1))}
		\label{main-example-2}
\end{figure}

\begin{figure}[ht]
		\includegraphics[width=10cm]{}
		\caption{Realizing handlebody-knots $H_1$ and $H_2$ as a handlebody-knot pair for a prime bi-knotted surface $F$ (Theorem~\ref{theo:main}\,(2))}
		\label{main-example}
\end{figure}

\bibliographystyle{amsrefs}

\end{document}